\theoremstyle{plain}
\newtheorem{theorem}{Theorem}[section]
\newtheorem{corollary}[theorem]{Corollary}
\theoremstyle{definition}
\newcommand{\norm}[1]{\left\lVert#1\right\rVert}
\newcommand{\NN}{{\mathbb{N}}}
\title{On the finite pair correlation function of van der Corput sequences}
\author{Christian Wei\ss{}}
\date{\today}
\begin{document}

\maketitle

\begin{abstract} In this note we derive an explicit formula for the finite empiric pair correlation function $F_N(s)$ of the van der Corput sequence in base $2$ for all $N \in \mathbb{N}$ and $s \geq 0$. The formula can be evaluated without explicit knowledge about the elements of the van der Corput sequence. Moreover, it can be immediately read off that $\lim_{N \to \infty} F_N(s)$ exists only for $0 \leq s \leq 1/2$. 
\end{abstract}

\section{Introduction}

The finite empiric pair correlation function of a sequence $(x_n)_{n \in \mathbb{N}} \in [0,1]$ is for $N \in \mathbb{N}$ and $s \in \mathbb{R}^+_0$ defined by
\begin{align} \label{eq:FN}
F_{N}(s) := \frac{1}{N} \# \left\{ 1 \leq k \neq l \leq N \ : \ \left\| x_k - x_l\right\| \leq \frac{s}{N} \right\},
\end{align}
where $\left\| \cdot \right\|$ is the distance of a number from its nearest integer. It measures the behavior of gaps between the first $N \in \mathbb{N}$ elements of $(x_n)_{n \in \mathbb{N}}$ on a local scale. A sequence $(x_n)_{n \in \mathbb{N}}$ is said to have Poissionian pair correlations if
$F(s):= \lim_{N \to \infty} F_{N}(s) = 2s$ is satisfied for all $s \geq 0$. In order to distinguish it from $F_N(s)$ we call $F(s)$ limiting pair correlation function.\\[12pt]
A generic uniformly distributed random sequence in $[0,1]$ drawn from uniform distribution has Poissonian pair correlations (see e.g. \cite{Mar07} for a proof). Nonetheless, there are only few explicitly known such examples, see \cite{BMV15}, and more recent ones in \cite{LST21} and \cite{LS22}. One of the reasons why such examples are difficult to find is that the gap structure of a sequence is in general hard to describe. Although research mainly focused on the generic (Poissonian) case, also the non-generic case attracts more attention in recent time: in \cite{MS13}, it is shown that the gap distribution of $(\left\{\log(n)\right\})_{n \in \mathbb{N}}$, where $\{ \cdot \}$ denotes the fractional part of a number, has an explicit distribution which is not Poissonian. In \cite{Wei23}, the limiting pair correlation function of $\left( \left\{ \frac{\log(2n-1)}{\log(2)} \right\} \right)_{n \in \mathbb{N}}$ is explicitly calculated by exploiting the simple gap structure of this sequence. Another result in \cite{Lut20} describes the limiting pair correlation function of orbits of a point in hyperbolic space under the action of a discrete subgroup.  Finally, in \cite{Say23}, the non-generic pair correlation statistic of the sequence $({n^\alpha})_{n \in \mathbb{N}}$ is studied for $0 < \alpha < 1$.\\[12pt]
In this note, we add to the growing body of literature by calculating for all $N \in \mathbb{N}$ and $s \geq 0$ the finite empiric pair correlation function $F_N(s)$ of a van der Corput sequence in base $2$, which is a classical example of a low-discrepancy sequence and widely discussed in the literature, see e.g. \cite{KN74}.\\[12pt]
Recall that for an integer $b \geq 2$ the $b$-ary representation of $n \in \NN$ is $n = \sum_{j=0}^\infty e_j b^j$ with $0 \leq e_j = e_j(n) < b$. The radical-inverse function is defined by $g_b(n)=\sum_{j=0}^\infty e_j b^{-j-1}$ for all $n \in \NN$ and the van der Corput sequence in base $b$ is given by $x_i := g_b(i-1)$ for $i \geq 2$. For convenience, we add $x_1 = 0$ as the first element of a van der Corput sequence because it simplifies the presentation of results in our context. 
\begin{theorem} \label{thm:main}
    Let $N \in \mathbb{N}, s \geq 0$ be arbitrary and let its $2$-ary representation of $N$ have the coefficients $e_0, e_1, \ldots, e_M \in \left\{ 0,1\right\}$. Then for the van der Corput sequence $(x_n)_{n \in \mathbb{N}}$ in base $b=2$ we have
    \begin{align} \label{eq} F_N(s) = \frac{1}{N} \sum_{k=0}^M e_k \left( \left\lfloor \frac{s}{N} 2^k \right\rfloor + \sum_{l=k+1}^{N} e_l \cdot 2 \cdot \left\lceil \frac{\left\lfloor\frac{s}{N} 2^{l+1}\right\rfloor}{2} \right\rceil  \right) 2^{k+1}. \end{align}  
\end{theorem}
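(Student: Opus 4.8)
The plan is to reduce the problem to a lattice-point count by exploiting the self-similar block structure of the van der Corput sequence. First I would note that the first $N$ points are exactly the set $\{g_2(0), g_2(1), \dots, g_2(N-1)\}$, and decompose the index range according to the binary expansion $N = \sum_{k} e_k 2^k$: for each $k$ with $e_k = 1$ one gets a block $B_k$ of $2^k$ consecutive integers, each block starting at a multiple of its own length $2^k$. The central observation is that $g_2$ sends such a block to a \emph{shifted equally-spaced set}: if $B_k = \{a, a+1, \dots, a+2^k-1\}$ with $2^k \mid a$, then writing $n = a + r$ splits the digits of $n$ into the low $k$ digits (those of $r$) and the high digits (those of $a$), so $g_2(n) = \mathrm{bitrev}_k(r)\, 2^{-k} + c_k$, where $c_k := g_2(a) \in [0, 2^{-k})$ depends only on the block. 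As $r$ ranges over $\{0,\dots,2^k-1\}$ this covers $\{c_k + j\,2^{-k} : 0 \le j < 2^k\}$; moreover the wrap-around gap on the torus is also $2^{-k}$, so $g_2(B_k)$ is a perfectly equally-spaced set of $2^k$ points of spacing $2^{-k}$ on $\mathbb{R}/\mathbb{Z}$.

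Having this, I would split the ordered pairs counted by $F_N(s)$ into \emph{intra-block} pairs (both indices in the same $B_k$) and \emph{inter-block} pairs. For a single equally-spaced block of $2^k$ points the intra-block count is immediate: each point has $2\lfloor \tfrac{s}{N}2^k\rfloor$ partners within distance $s/N$ (as long as $s < N/2$, before the antipodal point saturates the cycle), giving $2^{k+1}\lfloor \tfrac{s}{N}2^k\rfloor$ ordered pairs and matching the first term of \eqref{eq}. The inter-block count is where the real content lies, and it hinges on the relative offset of two blocks. Computing the offsets explicitly from $N = 2^{k_1}+\dots+2^{k_r}$ (with $k_1>\dots>k_r$) gives $c_{k_i} = \sum_{t<i} 2^{-k_t-1}$, and for two blocks $B_k, B_l$ with $l > k$ a short calculation shows $c_k - c_l \equiv 2^{-l-1} \pmod{2^{-l}}$. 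In words: every point of the coarse block $B_k$ sits exactly at a \emph{midpoint} of the fine lattice $B_l$ of spacing $2^{-l}$, and this offset is the same for all points of $B_k$ (since $2^{-k}$ is an integer multiple of $2^{-l}$). Establishing this half-spacing alignment is, I expect, the main obstacle — conceptually it is the whole ballgame — but it reduces to the digit bookkeeping above.

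Finally I would translate the midpoint geometry into the exact count. Because a point $p \in B_k$ lies at the midpoint of a $B_l$-cell, the points of $B_l$ lie at odd multiples $(2j+1)2^{-l-1}$ of the half-spacing away from $p$, arranged symmetrically on both sides. The number of these within radius $s/N$ on one side is $\lceil \tfrac{1}{2}\lfloor \tfrac{s}{N}2^{l+1}\rfloor \rceil$ (counting odd integers up to $\lfloor \tfrac{s}{N}2^{l+1}\rfloor$), so by symmetry each of the $2^k$ points of $B_k$ has $2\lceil \tfrac{1}{2}\lfloor \tfrac{s}{N}2^{l+1}\rfloor \rceil$ neighbours in $B_l$; counting both orientations gives $2^{k+1}\cdot 2\lceil \tfrac{1}{2}\lfloor \tfrac{s}{N}2^{l+1}\rfloor\rceil$ ordered inter-block pairs, which is exactly the inner sum of \eqref{eq}. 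The key phenomenon — and the reason a ceiling rounding \emph{to the nearest even integer} appears rather than a plain floor — is that the window reaches the two nearest $B_l$-points simultaneously, so the count jumps in steps of two. Summing the intra- and inter-block contributions over all $k$ with $e_k = 1$ and dividing by $N$ yields \eqref{eq}. The remaining care, which I would treat as routine, is the exact bookkeeping of the floor/ceiling thresholds and the torus wrap-around (and, for large $s$, the saturation of the small blocks).
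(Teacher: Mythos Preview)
Your proposal is correct and follows essentially the same route as the paper: the paper also decomposes the index range $\{1,\dots,N\}$ according to the binary expansion of $N$, computes the intra-block count from the fact that each block maps to an equally spaced set of step $2^{-k}$, and derives the ceiling expression for the inter-block count from exactly the midpoint alignment you isolate (stated there as ``the $x_j$ all have the form $l/2^{M+1}$ with odd $l$''). The only cosmetic difference is that the paper peels off the top bit and recurses on the residual set $C(s,M,N)$, whereas you unroll this recursion and treat all block pairs $(B_k,B_l)$ at once; your explicit computation of the offsets $c_{k_i}=\sum_{t<i}2^{-k_t-1}$ and of $c_k-c_l\equiv 2^{-l-1}\pmod{2^{-l}}$ makes the midpoint phenomenon more transparent than the paper's treatment.
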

To the best of our knowledge, this constitutes the first example of an exact closed-form expression of $F_N(s)$ for all $N \in \mathbb{N}$ and all $s \geq 0$, where the right hand side does not rely on explicit knowledge of the involved sequence. Moreover, the expression on the right hand side is surprisingly simple. Finally, the formula is superior in terms of running time because the time needed to evaluate the empiric pair correlation function grows quadratically in $N$, while the running time of the right hand side only grow logarithmically. For example, it would be almost infeasible to calculate the empiric pair correlation function for a given $s > 0$ and $N = 10^9$ on a standard computer via \eqref{eq:FN}, while the evaluation of \eqref{eq} takes less than a second.\\[12pt]
The main ingredient for proving Theorem~\ref{thm:main} is to decompose the set 
$$\left\{ 1 \leq k \neq l \leq N \ : \ \left\| x_k - x_l\right\| \leq \frac{s}{N} \right\}$$
into several subsets, where the indices of the elements of $(x_n)_{n \in \mathbb{N}}$ depend on powers of $2$ instead of $N$. This idea goes back to \cite{WS22}, where the weak limiting pair correlation function of van der Corput sequences was calculated. In principle, our proof technique could be applied to van der Corput sequences in arbitrary base but the expression on the right hand-side of \eqref{eq} would be much longer and more complex. Therefore, we decided here to restrict to the case where a short formula can be given. The reason why this can only be done in base $b=2$ is that there are only two different gap lengths for all $N \in \mathbb{N}$ then while there are up to three different gap lengths for all bases $b \geq 3$. \\[12pt]
In order to give an application of the formula, we show that the limit $\lim_{N \to \infty} F_N(s)$ only exists for $0 \leq s \leq \tfrac{1}{2}$.
\begin{corollary} \label{cor} The limit $\lim_{N \to \infty} F_N(s)$ exists if and only if $0 \leq s \leq \tfrac{1}{2}$. In this case $F(s) = 0$ holds. 
\end{corollary}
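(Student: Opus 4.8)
The plan is to extract both implications of the ``if and only if'' directly from the closed form \eqref{eq}, treating the two directions separately. For $0\le s\le\tfrac12$ I would prove the sharper fact that $F_N(s)=0$ for \emph{every} $N$, so that the limit exists, equals $0$, and nothing remains to estimate. The key observation is that every floor and ceiling bracket in \eqref{eq} vanishes in this range: for each $k\le M$ we have $2^{k}\le 2^{M}\le N$, whence $\tfrac{s}{N}2^{k}\le s\le\tfrac12<1$ and $\lfloor\tfrac{s}{N}2^{k}\rfloor=0$, while $\tfrac{s}{N}2^{l+1}\le 2s\le 1$, so the inner floor can be nonzero only if $\tfrac{s}{N}2^{l+1}=1$. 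That equality forces $s=\tfrac12$, $l=M$ and $N=2^{M}$; but then $e_{M}$ is the only nonzero digit, so the offending term either lies in an empty inner sum (the range $l>k$ is vacuous when $k=M$) or is multiplied by a vanishing $e_{k}$. Hence $F_N(s)\equiv 0$ and $\lim_{N\to\infty}F_N(s)=0$.

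For the converse I would show that when $s>\tfrac12$ the sequence $F_N(s)$ has two subsequential limits that disagree, so the limit cannot exist. The first subsequence is $N=2^{M}$: here $e_{M}=1$ is the only nonzero digit, the inner sum is empty, and \eqref{eq} collapses to $F_{2^{M}}(s)=2\lfloor s\rfloor$, independent of $M$. For the second I would evaluate \eqref{eq} at the two-digit integers $N=2^{M}+2^{M-p}$ with $p\ge 1$ fixed and $M\to\infty$. Only the digits $e_{M}$ and $e_{M-p}$ are nonzero, so after resolving the finitely many brackets every ratio $\tfrac{s}{N}2^{j}$ turns into one of $\tfrac{s2^{p}}{2^{p}+1}$, $\tfrac{s}{2^{p}+1}$, $\tfrac{2s\,2^{p}}{2^{p}+1}$, and the resulting value $L_{p}$ is again independent of $M$; thus $\lim_{M}F_{2^{M}+2^{M-p}}(s)=L_{p}$.

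It then remains to choose $p$ so that $L_{p}\neq 2\lfloor s\rfloor$. This is cleanest for $\tfrac12<s<1$, where $2\lfloor s\rfloor=0$: once $p$ is large enough that $2^{-p}\le 2s-1$, all first-order floors vanish while the single ceiling at $l=M$ equals $\lceil\tfrac12\rceil=1$, giving $F_{2^{M}+2^{M-p}}(s)=\tfrac{4}{2^{p}+1}>0$. For general $s>\tfrac12$ the same computation yields an explicit rational $L_{p}$ whose difference from $2\lfloor s\rfloor$ is a simple fraction with denominator $2^{p}+1$ and strictly positive numerator for suitable $p$, so the two subsequential limits differ and $\liminf_{N}F_N(s)<\limsup_{N}F_N(s)$. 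I expect the main obstacle to be precisely this bracket bookkeeping for larger $s$: one must identify which of the high-order floors $\lfloor\tfrac{s}{N}2^{j}\rfloor$ and ceilings are active, taking special care at the values $s\in\tfrac12\ZZ$, where the arguments sit exactly on integer jumps and a naive evaluation of $\lfloor 2s\rfloor$ is off by one. These isolated coincidences are avoided by choosing $p$ with $2^{p+1}\neq 2s$, after which the argument goes through uniformly and completes the proof.
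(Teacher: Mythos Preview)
Your approach is correct and is essentially the one taken in the paper: both proofs read off $F_N(s)\equiv 0$ for $s\le\tfrac12$ directly from the vanishing of all brackets in \eqref{eq}, and for $s>\tfrac12$ both compare the subsequence $N=2^{M}$ (giving $F_N(s)=2\lfloor s\rfloor$) against the two--digit subsequence $N=2^{M}+2^{M-p}$ (which the paper writes as $N=2^{M+k}+2^{k}$, fixing $M$ and letting $k\to\infty$---the same family after relabeling $p\leftrightarrow M$, $M\leftrightarrow M+k$). Your case analysis for general $s>\tfrac12$ is in fact more careful than the paper's terse version; the one minor slip is calling the numerator of $L_p-2\lfloor s\rfloor$ ``strictly positive'' when for integer $s$ it is $2s-2^{p+1}$, which can be negative---but you only need it nonzero, and your exclusion $2^{p+1}\neq 2s$ already guarantees that.
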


\paragraph{Acknowledgements.} Research on this article was conducted during a stay at the Universit\'{e} de Montr\'{e}al, whom the author would like to thank for their hospitality. 

\section{Proof of results}
At first we prove our main result for the van der Corput sequence in base $b = 2$ by applying the same decomposition of \eqref{eq:FN} into subsets as in \cite{WS22}. 
\begin{proof}[Proof of Theorem~\ref{thm:main}]
     Let us write the empiric pair correlation function as
\begin{align*} N \cdot F_N(s) & = \# \underbrace{\left\{ \norm{x_i-x_j} \leq \frac{s}{N} \, : \, 1 \leq i \neq j \leq 2^M  \right\}}_{=:A(s,M,N)} \\
& + 2 \# \underbrace{\left\{ \norm{x_i-x_j} \leq \frac{s}{N} \, : \, 1 \leq i  \leq 2^M, 2^{M} + 1 \leq j \leq N \right\}}_{=:B(s,M,N)} \\
& + \# \underbrace{\left\{ \norm{x_i-x_j} \leq \frac{s}{N} \, : \, 2^M+1 \leq i \neq j \leq N  \right\}}_{=:C(s,M,N)} \ 
\end{align*}
Since the set $A(s,M,N)$ consists of all point $x_i$ which are numbers of the form $\tfrac{k}{2^M}$ with $0 \leq k < 2^{M}$ its magnitude can immediately be calculated as
$$\# A(s,M,N) = \left\lfloor \frac{s}{N} 2^M \right\rfloor \cdot 2^M \cdot 2 =: a(s,M,N).$$
In the definition of the set $B(s,M,N)$, the $x_i$ are again of the form $\tfrac{k}{2^M}$ with $0 \leq k \leq 2^{M}$ while the $x_j$ all have the form $\frac{l}{2^{M+1}}$ with odd $1 \leq l < 2^{M+1}$ by the definition of van der Corput sequences. Hence it follows that
$$\# B(s,M,N) = \left\lceil \frac{\left\lfloor\frac{s}{N} 2^{M+1}\right\rfloor}{2} \right\rceil \cdot (N-2^M) \cdot 2 =: b(s,M,N).$$
Thus, it only remains to calculate $C(s,M,N +)$. To do that, we at first see that $(x_j)_{j=2^M+1}^N$ is the van der Corput sequence $(x_j)_{j=<1}^{N-2^{M}+1}$ translated to the right by $2^{-(M+1)}$. Note that $\norm{x_i-x_j}$ is invariant under simultaneous translation of $x_i$ and $x_j$. If we treat the simpler situation $N = 2^M + 2^k$ with $k < M$, then we can proceed inductively and apply the formula for $A(s,M,N)$ which yields
$$\# C(s,M,N) = \left\lfloor \frac{s}{N} 2^k \right\rfloor \cdot 2^k \cdot 2,$$
because the set of type $B$ is empty here. In the general case we obtain
$$C(s,M,N) = \sum e_k \cdot \left( a(s,k,N) + 2 \cdot b\left(s,k,N-\sum_{l=k+1}^M e_l 2^l\right) \right), $$
where the factor $2$ appears because sets of type $B$ are counted twice. Plugging in the corresponding expressions for $A(\cdot)$ and $B(\cdot)$ yields a sum of the form
    $$\sum_{k=1}^N e_k \left(  \left\lfloor \frac{s}{M} 2^k \right\rfloor 2^{k+1} + 4 \left\lceil \frac{\left\lfloor\frac{s}{M} 2^{k+1}\right\rfloor}{2} \right\rceil \sum_{l=1}^{k-1} e_l 2^l  \right).$$
Collecting powers of $2$, yields the formula on the right hand side of \eqref{eq}.
\end{proof}
Having now formula \eqref{eq} at hand, it is not hard to calculate the limiting behavior of $F_N(s)$.
\begin{proof}[Proof of Corollary~\ref{cor}]
 For $2^{M} \leq N < 2^{M+1}$, the $2$-ary representation of $N \in \mathbb{N}$ is of the form $N = 2^M + \sum_{k=1}^M e_k 2^k$. Thus, 
$$\left\lfloor \frac{s}{N} 2^{N+1} \right\rfloor = 0$$
for all $0 \leq s \leq \tfrac{1}{2}$ and the limit is $F_N(s) = 0$ by Theorem~\ref{thm:main}. Now let $s \in [l, l  + 1)$ for some $l \in \mathbb{N}_0$. Then, we have $F_N(s) = 2l$ for $N = 2^M$, again by Theorem~\ref{thm:main}. If $s \in (1/2 + l, 1/2 + l + 1]$ for some $l \in \mathbb{N}_0$, we choose $N$ big enough such that $\tfrac{2^{M+1}}{2^M+1} \cdot s > 2$. Then $F_N(s) \geq \tfrac{2^{M+1}}{2^M+1}$ for all $N = 2^{M+k} + 2^k$ with $k \in \mathbb{N}$. Thus, $F_N(s)$ cannot converge for $s > \tfrac{1}{2}$.
\end{proof}

\bibliographystyle{alpha}
\bibdata{literatur}
\bibliography{literatur}

\begin{thebibliography}{EBMV15}

\bibitem[EBMV15]{BMV15}
D.~El-Baz, J.~Marklof, and I.~Vinogradov.
\newblock The two-point correlation function of the fractional parts of
  $\sqrt{n}$ is {P}oisson.
\newblock {\em Proceeding of the AMS}, 143 (7):2815--2828, 2015.

\bibitem[KN74]{KN74}
L.~Kuipers and H.~Niederreiter.
\newblock {\em Uniform distribution of sequences}.
\newblock John Wiley \& Sons, New York, 1974.

\bibitem[LST21]{LST21}
C.~Lutsko, A.~Sourmelidis, and N.~Technau.
\newblock Pair correlation of the fractional parts of $\alpha n^{\theta}$.
\newblock {\em arXiv:2106.09800}, 2021.

\bibitem[LT22]{LS22}
C.~Lutsko and N.~Technau.
\newblock Full poissonian local statistics of slowly growing sequences.
\newblock {\em arXiv:2206.07809}, 2022.

\bibitem[Lut20]{Lut20}
C.~Lutsko.
\newblock Directions in orbits of geometrically finite hyperbolic subgroups.
\newblock {\em Mathematical Proceedings of the Cambridge Philosophical
  Society}, 171 (2):277--316, 2020.

\bibitem[Mar07]{Mar07}
J.~Marklof.
\newblock Distribution modulo one and {R}atner's theorem.
\newblock In A.~Granville and Z.~Rudnick, editors, {\em Equidistribution in
  Number Theory, An Introduction. NATO Science Series}, volume 237. Springer,
  Dordrecht, 2007.

\bibitem[MS13]{MS13}
J.~Marklof and A.~Str\"ombergsson.
\newblock Gaps between logs.
\newblock {\em Bulletin of the London Mathematical Society}, 45 (6):1267--1280,
  2013.

\bibitem[Say23]{Say23}
R.~Sayous.
\newblock Effective pair correlations of fractional powers of integers.
\newblock {\em arXiv:2306.00793}, 2023.

\bibitem[Wei23]{Wei23}
C.~Wei\ss.
\newblock An explicit non-poissonian pair correlation function.
\newblock {\em arXiv:2304.14202}, 2023.

\bibitem[WS22]{WS22}
C.~Wei\ss{} and T.~Skill.
\newblock Sequences with almost {P}oissonian pair correlations.
\newblock {\em Journal of Number Theory}, 236:116--127, 2022.

\end{thebibliography}

\textsc{Ruhr West University of Applied Sciences, Duisburger Str. 100, D-45479 M\"ulheim an der Ruhr,} \texttt{christian.weiss@hs-ruhrwest.de}

\end{document}